\documentclass[a4paper,12pt,reqno]{amsart}

\usepackage{amsmath}
\usepackage{amssymb}
\usepackage{amsfonts}
\usepackage{graphicx}
\usepackage{mathtools} 
\usepackage{bbm} 
\usepackage[colorlinks]{hyperref}
\newtheorem{theorem}{Theorem}[section]
\newtheorem{lemma}[theorem]{Lemma}

\newtheorem{corollary}[theorem]{Corollary}
\theoremstyle{remark}
\newtheorem{remark}[theorem]{Remark}
\newcommand{\R}{\mathbb{R}}
\newcommand{\Z}{\mathbb{Z}}
\newcommand{\C}{\mathbb{C}}
\newcommand{\dist}{\operatorname{dist}}
\newcommand{\ud}{\,\mathrm{d}}
\newcommand{\Real}{\operatorname{Re}}
\newcommand{\Imag}{\operatorname{Im}}
\newcommand{\ab}{A_\infty}

\DeclarePairedDelimiter\abs{\lvert}{\rvert}
\DeclarePairedDelimiter\norm{\lVert}{\rVert}

\title[Explicit constants in $L^p$-Hardy inequalities]{Explicit constants in $L^p$-Hardy inequalities for Aharonov--Bohm potentials}

\author[D. Suragan]{Durvudkhan Suragan}
\address{
	Durvudkhan Suragan:
	\endgraf
	Department of Mathematics
	\endgraf
	Nazarbayev University
	\endgraf
	Astana 010000
	\endgraf
	Kazakhstan
	\endgraf
	{\it E-mail address:} {\rm  durvudkhan.suragan@nu.edu.kz}
}

\subjclass[2020]{Primary 35A23; Secondary 26D10, 81Q12, 81Q10}
\keywords{$L^p$-Hardy inequality; Aharonov--Bohm potential; Hardy constants; magnetic $p$-Laplacian.}

\begin{document}

\begin{abstract}
For the two-dimensional Aharonov--Bohm potential $A_\beta$ with flux $\beta\notin\Z$ and $1<p<2$, Cazacu, Krej\v{c}i\v{r}\'{\i}k, Lam and Laptev \cite{CKLL} proved by a compactness argument that their constant $\lambda_\beta(p)$ in the $L^p$-Hardy inequality strictly exceeds the free constant $\big(\tfrac{2-p}{p}\big)^p$, and asked for a constructive proof with explicit estimates and for comparability of $\lambda_\beta(p)$ with a quantity depending on $\dist(\beta,\Z)$. We answer both questions by using a compactness-free two-sided bound for the twisted angular constant.
Our explicit Hardy constant is $$\big[\big(\tfrac{2-p}{p}\big)^{2}+\big(\tfrac{\sin(\pi\dist(\beta,\Z))}{\pi}\big)^{2}\big]^{p/2},\quad 1<p<2.$$ 
As a byproduct we observe that when $p\ge 2$ the Aharonov--Bohm field produces an $L^p$-Hardy inequality with the usual homogeneous weight $\abs{x}^{-p}$. Our approach also provides new $L^p$-Hardy inequalities with explicit constants for the complex 
AB potentials.
\end{abstract}

\maketitle

\section{Introduction and main results}\label{sec:intro}

It is classical, going back to Hardy and Leray, that for $d\ge 2$ and $1<p<d$ one has the $L^p$-Hardy inequality
\begin{equation}\label{eq:freehardy}
\int_{\R^d}\abs{\nabla u}^{p}\ud x\ge\mu_{p,d}\int_{\R^d}\frac{\abs{u}^{p}}{\abs{x}^{p}}\ud x,
\end{equation}
for all $u\in W^{1,p}(\R^d)$, with the optimal constant $\mu_{p,d}:=\Big(\frac{d-p}{p}\Big)^{p}$. In the borderline and supcritical regimes $p\ge d$, by contrast, the free $p$-Laplacian becomes critical, that is, not only does \eqref{eq:freehardy} fail, but no inequality $\int_{\R^d}\abs{\nabla u}^{p}\ud x\ge\int_{\R^d}V\abs{u}^{p}\ud x$ over $u\in C_c^{\infty}(\R^d)$ survives with any nontrivial nonnegative potential $V$ in place of $\mu_{p,d}\abs{x}^{-p}$; see \cite[Proposition 1.1]{CKLL}.

A remarkable phenomenon, discovered in the linear case $p=2$ by Laptev and Weidl \cite{LW}, is that a magnetic field can restore (or even improve) Hardy inequalities. 

For the two-dimensional Aharonov--Bohm (AB) potential
\begin{equation}\label{eq:AB}
A_\beta(x)\;=\;\beta\,\frac{(x_2,-x_1)}{\abs{x}^{2}},\quad \beta\in\R,\quad x\in\R^{2}\setminus\{0\},
\end{equation}
whose magnetic field vanishes away from the origin but which carries the nontrivial circulation (flux) $-2\pi\beta$ around it, one has \cite{LW}
\begin{equation}\label{eq:LW}
\int_{\R^2}\abs{\nabla_{A_\beta}u}^{2}\ud x\ge\dist(\beta,\Z)^{2}\int_{\R^2}\frac{\abs{u}^{2}}{\abs{x}^{2}}\ud x,
\quad u\in C_c^{\infty}(\R^{2}\setminus\{0\}),
\end{equation}
with the sharp constant $\dist(\beta,\Z)^{2}$; here $\nabla_{A_\beta}u:=\nabla u+iA_\beta u$ denotes the AB magnetic gradient acting on complex-valued functions. Thus, although the free Hardy inequality fails entirely for $d=2=p$, a non-integer AB flux produces one, quantified exactly by the distance of the flux to the integers. For further developments in the $L^2$-setting we refer to \cite{BDELL,CK,FKLV,LY} and the references therein. 

In a complementary direction, Krej\v{c}i\v{r}\'{i}k \cite{Kre19} extended the Laptev--Weidl inequality \eqref{eq:LW} to complex-valued AB potentials, showing that even a purely imaginary flux restores the Hardy inequality. In the present paper, we also aim to obtain $L^p-$analogues of this complex AB potential result.

The extension of this circle of ideas to the nonlinear setting $p\neq 2$ was initiated by Cazacu, Krej\v{c}i\v{r}\'{\i}k, Lam and Laptev \cite{CKLL}. Among other results, they proved:

\begin{theorem}[{\cite[Theorem 1.4]{CKLL}}]\label{thm:CKLL14}
Let $1<p<2$ and let $A_\beta$ be given by \eqref{eq:AB}. If $\beta\notin\Z$, then there exists a constant
$\lambda_\beta(p)>\big(\tfrac{2-p}{p}\big)^{p}$
such that
\begin{equation}\label{eq:hardyAB}
\int_{\R^2}\abs{\nabla_{A_\beta}u}^{p}\ud x\ge\lambda_\beta(p)\int_{\R^2}\frac{\abs{u}^{p}}{\abs{x}^{p}}\ud x,
\quad\forall\,u\in C_c^{\infty}(\R^{2}).
\end{equation}
\end{theorem}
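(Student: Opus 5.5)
Write $x=r(\cos\theta,\sin\theta)$ and $u(r,\theta)$. Since $A_\beta=\beta(x_2,-x_1)/\abs{x}^2$ is purely angular, the magnetic gradient splits as
\[
\nabla_{A_\beta}u=\partial_r u\,e_r+\frac{1}{r}\big(\partial_\theta u-i\beta u\big)e_\theta,
\qquad\text{so}\qquad
\abs{\nabla_{A_\beta}u}^2=\abs{\partial_r u}^2+\frac{1}{r^2}\abs{\partial_\theta u-i\beta u}^2 .
\]
Gauging by $e^{i\beta\theta}$ turns $\partial_\theta-i\beta$ into $\partial_\theta$ acting on functions $v$ on $[0,2\pi]$ satisfying the twisted condition $v(2\pi)=e^{2\pi i\beta}v(0)$. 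The first step is to isolate the twisted angular constant
\[
\kappa_\beta(p):=\inf\Big\{\int_0^{2\pi}\abs{v'}^p\ud\theta\Big/\!\int_0^{2\pi}\abs{v}^p\ud\theta\Big\},
\]
the infimum taken over such $v$. Because $\beta\notin\Z$, no nonzero constant satisfies the boundary condition, so I expect $\kappa_\beta(p)>0$. To get an explicit lower bound without compactness, write $v=\rho e^{i\varphi}$. Then $\abs{v'}^2=\rho'^2+\rho^2\varphi'^2$, and the total phase increment satisfies $\int\varphi'\in 2\pi(\beta+\Z)$, hence $\abs{\int\varphi'}\ge 2\pi\dist(\beta,\Z)$.

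I would bound $\kappa_\beta$ via a pointwise-in-$r$ mixture of a radial and an angular Hardy estimate. Concretely, I aim to prove
\[
\int\abs{\nabla_A u}^p\ge\int\frac{\big(a^2+b^2\big)^{p/2}\abs{u}^p}{r^p},
\qquad a=\frac{2-p}{p},\quad b=\frac{\sin(\pi\dist(\beta,\Z))}{\pi},
\]
as the paper's abstract suggests. The route is the ground-state / vector-field method. Choose a vector field $F=(a\,e_r+b\,\omega(\theta)e_\theta)\abs{u}^{p-2}\bar u\,/r^{p-1}$, or more precisely use the identity
\[
\int\abs{\nabla_A u}^p\ge\int\Real\big(\ldots\big)
\]
obtained from convexity, $\abs{\xi}^p\ge\abs{\eta}^p+p\abs{\eta}^{p-2}\Real\langle\eta,\xi-\eta\rangle$. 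Here $\eta$ is the gradient of a model "ground state" $\abs{x}^{-a}e^{i\psi(\theta)}$ with $\psi$ chosen so that it is compatible with the twisted boundary condition. Integrating by parts, the radial part produces $a^p$-type terms via $\mathrm{div}(x/\abs{x}^p)=(2-p)/\abs{x}^p$. The angular part requires a real or complex function on the circle whose logarithmic derivative has modulus at least $b$ and satisfies the twist. For $p=2$ the bound $\dist^2\ge(\sin\pi\delta/\pi)^2$ is consistent, so the angular constant $b$ is a safe, nonsharp choice. I would use the elementary inequality $\int_0^{2\pi}\abs{v'}^p\ge b^p\int\abs{v}^p$ for twisted $v$. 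I would prove it via the one-dimensional estimate $\abs{e^{2\pi i\beta}-1}=2\sin(\pi\delta)$ together with Hölder, possibly combined with a Wirtinger-type argument on $\rho$ and $\varphi$.

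The main obstacle is combining the radial and angular contributions into $(a^2+b^2)^{p/2}$ rather than into $a^p+b^p$. For $p<2$ we have $(a^2+b^2)^{p/2}\le a^p+b^p$, but the two pieces cannot simply be added, because $\abs{\xi}^p$ does not split additively. I would handle this by testing against the combined field $\eta=(a e_r+b\,e_\theta)/r$-type direction and applying the convexity inequality once to the full vector. The term $\Real\langle\eta,\nabla_A u\rangle\abs{u}^{p-2}\bar u$ integrates to $(a^2+b^2)\abs{\eta}^{p-2}\abs{u}^p/r^p$-type quantities, provided the angular term can be integrated by parts using the twist. If that fails I would fall back on the weaker but still explicit bound $\lambda\ge a^p+c\,b^p$ obtained from separate estimates, which already gives $\lambda_\beta(p)>\big(\tfrac{2-p}{p}\big)^p$, as Theorem~\ref{thm:CKLL14} claims.
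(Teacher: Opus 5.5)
There is a genuine gap, and it sits in the step you yourself flag as the main obstacle. Your angular step is sound and is essentially the paper's. Gauge by $w=e^{i\beta\theta}u$ and extend $w$ quasi-periodically by $w(\theta+2\pi)=e^{2\pi i\beta}w(\theta)$. Then $2\sin(\pi\delta)\abs{w(\theta)}=\abs{w(\theta+2\pi)-w(\theta)}\le\int_0^{2\pi}\abs{w'}$ holds for \emph{every} $\theta$, where $\delta:=\dist(\beta,\Z)$. Integrating this with H\"older gives $\int_0^{2\pi}\abs{w'}^p\ge b^p\int_0^{2\pi}\abs{w}^p$ with $b=\sin(\pi\delta)/\pi$.

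What is missing is a way to combine this angular bound with the radial Hardy inequality when $1<p<2$, and neither of your two routes does it. In the single-convexity/ground-state route, the angular term cannot be integrated by parts. First, $\Real\big(\abs{u}^{p-2}\bar u(\partial_\theta u-i\beta u)\big)=\tfrac1p\partial_\theta\abs{u}^p$ does not involve $\beta$ at all. Second, if you give the $e_\theta$-component of $\eta$ an imaginary part, $\beta$ enters only through the current $\abs{u}^{p-2}\big(\Imag(\bar u\,\partial_\theta u)-\beta\abs{u}^2\big)$. This is not a divergence and has no sign: for $u=h(r)e^{in\theta}$ it equals $(n-\beta)\abs{u}^p$, which takes either sign and is unbounded in $n$. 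The twist only constrains the total winding, and for $p\neq2$ there is no Fourier decomposition that would let you adapt the phase of $\eta$ mode by mode.

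The fallback fails as well. Write $R(u)=\int\abs{\partial_r u}^p\ud x$ and $\Phi(u)=\int r^{-p}\abs{\partial_\theta u-i\beta u}^p\ud x$. Since $(X^2+Y^2)^{p/2}\le X^p+Y^p$ for $p<2$, the separate bounds $R(u)\ge a^p\int\abs{u}^p\abs{x}^{-p}$ and $\Phi(u)\ge b^p\int\abs{u}^p\abs{x}^{-p}$ cannot simply be added. In fact a bound $a^p+c\,b^p$ with $c>0$ independent of $\beta$ is false. Take $u_L=r^{-a}\chi(L^{-1}\log r)e^{in\theta}$ with $\abs{n-\beta}=\delta$ and $\chi\in C_c^\infty(\R)\setminus\{0\}$. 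Its Rayleigh quotient tends to $(a^2+\delta^2)^{p/2}\le a^p+\tfrac p2a^{p-2}\delta^2$, which lies below $a^p+c(2\delta/\pi)^p$ for small $\delta$.

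The missing idea is an \emph{integrated} superadditivity: the reverse Minkowski inequality $\norm{f+g}_q\ge\norm{f}_q+\norm{g}_q$ for $0<q<1$. Apply it with $q=p/2$ on $(0,\infty)\times(0,2\pi)$ with measure $r\ud r\ud\theta$ to $f=\abs{\partial_r u}^2$ and $g=r^{-2}\abs{\partial_\theta u-i\beta u}^2$. This gives
\[
\int_{\R^2}\abs{\nabla_{A_\beta}u}^p\ud x\ge\big[R(u)^{2/p}+\Phi(u)^{2/p}\big]^{p/2}\ge\big(a^2+b^2\big)^{p/2}\int_{\R^2}\frac{\abs{u}^p}{\abs{x}^p}\ud x,
\]
which is exactly the constant you were aiming for, and is the paper's proof. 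You need $\Phi(u)<\infty$ for $u\in C_c^\infty(\R^2)$; this holds because the integrand is $O(r^{1-p})$ near $0$.

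If you prefer a pointwise argument, Cauchy--Schwarz gives $\abs{\nabla_{A_\beta}u}\ge\cos\alpha\,\abs{\partial_r u}+\sin\alpha\,r^{-1}\abs{\partial_\theta u-i\beta u}$. Superadditivity of $t\mapsto t^p$ then yields $\int\abs{\nabla_{A_\beta}u}^p\ge\cos^p\alpha\,R(u)+\sin^p\alpha\,\Phi(u)$. Since $1-\cos^p\alpha=O(\alpha^2)$ while $\sin^p\alpha\ge(2\alpha/\pi)^p$ and $p<2$, a small $\alpha>0$ already gives a constant strictly above $a^p$. That suffices for the statement, though it is weaker than $(a^2+b^2)^{p/2}$.
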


The proof of Theorem \ref{thm:CKLL14} in \cite{CKLL} reduces, via a reverse Minkowski inequality (recalled in Section \ref{sec:reduction} below), to the strict positivity of the twisted angular constant
\begin{equation}\label{eq:lambdadef}
\lambda(\beta,p)\;:=\;\inf_{\substack{u\in W^{1,p}(0,2\pi)\setminus\{0\}\\ u(0)=u(2\pi)}}
\frac{\displaystyle\int_{0}^{2\pi}\abs{\partial_\varphi u+i\beta u}^{p}\ud\varphi}
{\displaystyle\int_{0}^{2\pi}\abs{u}^{p}\ud\varphi}\,.
\end{equation}
In \cite{CKLL} the positivity $\lambda(\beta,p)>0$ for $\beta\notin\Z$ is established by contradiction, using weak compactness in $W^{1,p}(0,2\pi)$, the compact embedding into $L^p$, Mazur's lemma and the H\"older embedding $W^{1,p}\hookrightarrow C^{0,1-1/p}$. This argument is soft: it produces no numerical lower bound. Accordingly, \cite[Remark 1.2]{CKLL} poses the following open problem: give a constructive proof of Theorem \ref{thm:CKLL14} with explicit estimates on $\lambda_\beta(p)$, and decide whether $\lambda_\beta(p)$ is comparable with a quantity depending on $\dist(\beta,\Z)$, as in the sharp result \eqref{eq:LW} for $p=2$.

We remark that the companion open problem raised in \cite[Remark 1.1]{CKLL}, concerning the constant $c_p$ in the improved Hardy inequalities with general magnetic fields, has recently been resolved in \cite{CT}; the AB problem of \cite[Remark 1.2]{CKLL} addressed here is, to the best of our knowledge, untouched by these works.

In this note we resolve the problem of \cite[Remark 1.2]{CKLL}. Our main ingredient are the following quantitative estimates  on the angular constant \eqref{eq:lambdadef} for all $1<p<\infty$.

\begin{theorem}\label{thm:angular}
Let $1<p<\infty$, $\beta\in\R$, and set $\theta:=\dist(\beta,\Z)\in[0,\tfrac12]$. Then
\begin{equation}\label{eq:twosided}
\left(\frac{\sin(\pi\theta)}{\pi}\right)^{p}\le\lambda(\beta,p)\;\le\;\theta^{p}.
\end{equation}
In particular,
\begin{equation}\label{eq:comparable}
\Big(\tfrac{2}{\pi}\Big)^{p}\,\dist(\beta,\Z)^{p}\le\lambda(\beta,p)\le\dist(\beta,\Z)^{p}.
\end{equation}
\end{theorem}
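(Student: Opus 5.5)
The plan is to prove the two inequalities in \eqref{eq:twosided} separately: the upper bound by an explicit test function, the lower bound by a gauge transformation followed by an elementary pointwise estimate. Then \eqref{eq:comparable} follows from Jordan's inequality.

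For the upper bound, I would take $k\in\Z$ with $\abs{\beta-k}=\theta$ and the admissible trial function $u(\varphi)=e^{-ik\varphi}$. It is smooth and satisfies $u(0)=u(2\pi)$. Since $\partial_\varphi u+i\beta u=i(\beta-k)u$, the Rayleigh quotient in \eqref{eq:lambdadef} equals $\abs{\beta-k}^p=\theta^p$. Hence $\lambda(\beta,p)\le\theta^p$.

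For the lower bound, I would gauge away the twist. Given admissible $u$, set $v(\varphi):=e^{i\beta\varphi}u(\varphi)$. Then $\abs{v}=\abs{u}$ and $\abs{v'}=\abs{\partial_\varphi u+i\beta u}$, and the periodicity condition becomes $v(2\pi)=e^{2\pi i\beta}v(0)$. I would extend $v$ to all of $\R$ quasi-periodically by $v(\varphi+2\pi)=e^{2\pi i\beta}v(\varphi)$. This extension is locally $W^{1,p}$, hence absolutely continuous, precisely because the boundary condition matches. Moreover $\abs{v'}$ is then $2\pi$-periodic.

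For every $\varphi\in[0,2\pi]$ the fundamental theorem of calculus and H\"older's inequality give
\begin{equation*}
2\sin(\pi\theta)\,\abs{v(\varphi)}=\abs{e^{2\pi i\beta}-1}\,\abs{v(\varphi)}=\abs{v(\varphi+2\pi)-v(\varphi)}\le\int_\varphi^{\varphi+2\pi}\abs{v'}\ud s=\int_0^{2\pi}\abs{v'}\ud s\le(2\pi)^{1-1/p}\Big(\int_0^{2\pi}\abs{v'}^p\ud s\Big)^{1/p}.
\end{equation*}
Here I use $\abs{e^{2\pi i\beta}-1}=2\abs{\sin(\pi\beta)}=2\sin(\pi\theta)$. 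Raising to the $p$-th power and integrating over $\varphi\in(0,2\pi)$ yields
\begin{equation*}
(2\sin\pi\theta)^p\int_0^{2\pi}\abs{v}^p\ud\varphi\le(2\pi)^{p}\int_0^{2\pi}\abs{v'}^p\ud\varphi.
\end{equation*}
This is exactly $\lambda(\beta,p)\ge(\sin(\pi\theta)/\pi)^p$.

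Finally, \eqref{eq:comparable} follows from Jordan's inequality $\sin(\pi\theta)\ge2\theta$ on $[0,\tfrac12]$. I do not expect a genuine obstacle. The only point needing care is justifying the quasi-periodic extension, namely its absolute continuity and the periodicity of $\abs{v'}$, so that the integral over $[\varphi,\varphi+2\pi]$ reduces to one over $[0,2\pi]$. Alternatively, one can avoid the extension by splitting the interval at $\varphi$ and applying the boundary relation once.
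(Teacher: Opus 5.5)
Your proposal is correct and follows essentially the same route as the paper. The paper also proves the upper bound with the exponentials $e^{in\varphi}$, and proves the lower bound via the gauge $w=e^{i\beta\varphi}u$, a quasi-periodic extension, the fundamental theorem of calculus plus H\"older over a period, and integration in $\varphi$, concluding with $\sin(\pi\theta)\ge 2\theta$. Your alternative of splitting at $\varphi$ and using $v(2\pi)=e^{2\pi i\beta}v(0)$ once is also valid and slightly simpler, since it avoids the extension altogether.
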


At $p=2$, expansion into the Fourier basis $\{e^{in\varphi}\}_{n\in\Z}$ shows that the exponentials are exact minimisers and $\lambda(\beta,2)=\theta^{2}$; thus the upper bound in \eqref{eq:twosided} is attained at $p=2$, and the lower bound loses only the universal factor.
The proof of the lower bound in Theorem \ref{thm:angular} is simple and uses no compactness. 
Feeding Theorem \ref{thm:angular} into the scheme of the proof of \cite[Theorem 1.4]{CKLL} produces the explicit magnetic Hardy inequality in the range $1<p<2$.

\begin{theorem}\label{thm:main}
Let $1<p<2$, $\beta\in\R$, and $\theta:=\dist(\beta,\Z)$. Then for all $u\in C_c^{\infty}(\R^{2})$,
\begin{equation}\label{eq:main}
\int_{\R^2}\abs{\nabla_{A_\beta}u}^{p}\ud x
\ge
\Bigg[\Big(\frac{2-p}{p}\Big)^{2}+\Big(\frac{\sin(\pi\theta)}{\pi}\Big)^{2}\Bigg]^{p/2}
\int_{\R^2}\frac{\abs{u}^{p}}{\abs{x}^{p}}\ud x .
\end{equation}

\end{theorem}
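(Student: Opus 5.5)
Pass to polar coordinates $x=r(\cos\varphi,\sin\varphi)$. For $u\in C_c^\infty(\R^2)$ the AB gradient splits pointwise into orthogonal radial and angular parts,
\[
\abs{\nabla_{A_\beta}u}^2=\abs{\partial_r u}^2+\frac{1}{r^2}\abs{\partial_\varphi u+i\beta u}^2 .
\]
(This holds up to the sign convention for $\beta$, which is harmless because $\lambda(\beta,p)=\lambda(-\beta,p)$ by complex conjugation.) The plan follows the scheme of \cite[Theorem 1.4]{CKLL}: separate the two pieces, bound the radial piece by the one-dimensional weighted Hardy inequality with constant $\big(\frac{2-p}{p}\big)^p$, bound the angular piece on each circle by Theorem \ref{thm:angular}, and then recombine them.

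\emph{Step 1: recombination.} For $0<p/2<1$ the reverse Minkowski inequality in $L^{p/2}$ gives, for nonnegative $f,g$,
\[
\int \big(f^2+g^2\big)^{p/2}\,d\mu\ \ge\ \Big[\Big(\int f^p\,d\mu\Big)^{2/p}+\Big(\int g^p\,d\mu\Big)^{2/p}\Big]^{p/2}.
\]
This is the concavity of $(s,t)\mapsto(s^{2/p}+t^{2/p})^{p/2}$ read in reverse, i.e.\ Minkowski for the exponent $2/p>1$. Apply it with $f=\abs{\partial_r u}$, $g=r^{-1}\abs{\partial_\varphi u+i\beta u}$ and $d\mu=r\,dr\,d\varphi$.

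\emph{Step 2: radial term.} For each fixed $\varphi$, the one-dimensional Hardy inequality $\int_0^\infty\abs{\partial_r v}^p r\,dr\ge\big(\frac{2-p}{p}\big)^p\int_0^\infty\abs{v}^p r^{1-p}\,dr$ holds for $v$ smooth with compact support in $[0,\infty)$, since $p<2$. Apply it to the complex-valued $v=u(r,\cdot)$, using $\abs{\partial_r\abs{u}}\le\abs{\partial_r u}$, and integrate in $\varphi$. This gives
\[
\int f^p\,d\mu\ge\Big(\tfrac{2-p}{p}\Big)^p I,\qquad I:=\int_{\R^2}\frac{\abs{u}^p}{\abs{x}^p}\ud x .
\]

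\emph{Step 3: angular term.} For each $r>0$, the function $\varphi\mapsto u(r,\varphi)$ is smooth and $2\pi$-periodic, so it is admissible in \eqref{eq:lambdadef}. The lower bound of Theorem \ref{thm:angular} then gives
\[
\int_0^{2\pi}\abs{\partial_\varphi u+i\beta u}^p\,d\varphi\ge\Big(\frac{\sin\pi\theta}{\pi}\Big)^p\int_0^{2\pi}\abs{u}^p\,d\varphi .
\]
Multiplying by $r^{1-p}$ and integrating in $r$ yields $\int g^p\,d\mu\ge\big(\frac{\sin\pi\theta}{\pi}\big)^p I$.

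\emph{Step 4: conclusion.} Insert Steps 2 and 3 into the inequality of Step 1, using that $t\mapsto t^{2/p}$ is increasing:
\[
\int_{\R^2}\abs{\nabla_{A_\beta}u}^p\ud x\ \ge\ \Big[\Big(\tfrac{2-p}{p}\Big)^2 I^{2/p}+\Big(\tfrac{\sin\pi\theta}{\pi}\Big)^2 I^{2/p}\Big]^{p/2}.
\]
Factoring out $I$ gives exactly \eqref{eq:main}.

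The main point needing care is Step 1. One must check that the reverse Minkowski inequality is applied in the right direction with exponent $p/2<1$. One must also handle the origin: $u$ need not vanish at $0$, but $r^{1-p}$ is integrable near $0$ for $p<2$ and $\abs{u}^p\abs{x}^{-p}$ is integrable, so the radial Hardy inequality still applies to functions that are merely bounded near $r=0$. I would justify this by excising a small disc and letting its radius tend to zero, or by a direct integration-by-parts argument on $[0,\infty)$.
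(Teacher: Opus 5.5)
Your proposal is correct and follows the paper's proof essentially step for step: polar splitting, reverse Minkowski with exponent $p/2$ (equivalently, Minkowski's integral inequality for the $\ell^{2/p}$-norm), the weighted one-dimensional Hardy inequality on rays for the radial part, and slicing plus the lower bound of Theorem \ref{thm:angular} for the angular part. One small wording slip: Step 1 follows from the \emph{convexity} (triangle inequality) of the norm $(s,t)\mapsto(s^{2/p}+t^{2/p})^{p/2}$, not its concavity, but the inequality you state is the correct one.
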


According to \eqref{eq:main}, for $\forall\beta\notin\Z$, for the best/sharp constant $c_{best}$ (the optimal global constant) one may take 
$$
\Bigg[\Big(\frac{2-p}{p}\Big)^{2}+\dist(\beta,\Z)^{2}\Bigg]^{p/2} \ge c_{best}\ge\lambda_\beta(p)$$$$\ge
\Bigg[\Big(\frac{2-p}{p}\Big)^{2}+\Big(\frac{2\,\dist(\beta,\Z)}{\pi}\Big)^{2}\Bigg]^{p/2}.
$$
Here, the lower bound on $\lambda_\beta(p)$ 
follows from $\sin(\pi\theta)\ge2\theta$ and the upper bound follows from a standard logarithmic cutoff
$u_L(r,\varphi)
=
r^{-(2-p)/p}
\eta_L(\log r)e^{in\varphi},$
for some $n$ such that  $|n+\beta|=\theta$.

Because Theorem \ref{thm:angular} holds for all $1<p<\infty$, it also yields new information in the critical and supcritical regimes $p\ge 2$. Note that \cite[Theorem 1.1]{CKLL} does not apply here: it requires a smooth, 
nontrivial magnetic field, whereas the AB field vanishes identically on $\R^2\setminus\{0\}$.

\begin{theorem}\label{cor:allp}
Let $2\leq p<\infty$, $\beta\in\R$, and $\theta:=\dist(\beta,\Z)$. Then for all $u\in C_c^{\infty}(\R^{2}\setminus\{0\})$,
\begin{equation}\label{eq:allp}
\int_{\R^2}\abs{\nabla_{A_\beta}u}^{p}\ud x
\ge \left[
\left(\frac{p-2}{p}\right)^p+
\left(\frac{\sin(\pi\theta)}{\pi}\right)^p
\right]\int_{\R^2}\frac{\abs{u}^{p}}{\abs{x}^{p}}\ud x .
\end{equation}
\end{theorem}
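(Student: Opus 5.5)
Work in polar coordinates $x=(r\cos\varphi,r\sin\varphi)$ for $u\in C_c^\infty(\R^2\setminus\{0\})$. Since $A_\beta$ is tangential with $A_\beta\cdot e_\varphi=-\beta/r$ (up to the sign convention, which is immaterial since only $\dist(\beta,\Z)$ enters), we have
\[
\abs{\nabla_{A_\beta}u}^2=\abs{\partial_r u}^2+\frac{1}{r^2}\abs{\partial_\varphi u+i\beta' u}^2
\]
with $\beta'=\pm\beta$, so $\dist(\beta',\Z)=\theta$. For $p\ge 2$ I will use the elementary superadditivity $(a^2+b^2)^{p/2}\ge a^p+b^p$ for $a,b\ge0$. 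This holds because $t\mapsto t^{p/2}$ is convex with value $0$ at $0$, hence superadditive. This is exactly where $p\ge2$ is needed, and it explains why the constant in \eqref{eq:allp} is a sum of $p$-th powers rather than the $\ell^2$-type combination of Theorem \ref{thm:main}. It gives the pointwise bound
\[
\abs{\nabla_{A_\beta}u}^p\ge\abs{\partial_r u}^p+r^{-p}\abs{\partial_\varphi u+i\beta' u}^p.
\]

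Integrate the angular term: for each fixed $r>0$, the function $\varphi\mapsto u(r,\varphi)$ is $2\pi$-periodic and smooth. The lower bound in Theorem \ref{thm:angular} then gives
\[
\int_0^{2\pi}\abs{\partial_\varphi u+i\beta' u}^p\ud\varphi\ge\Big(\tfrac{\sin(\pi\theta)}{\pi}\Big)^p\int_0^{2\pi}\abs{u}^p\ud\varphi .
\]
Multiplying by $r^{1-p}$ and integrating in $r$ yields $\big(\tfrac{\sin\pi\theta}{\pi}\big)^p\int\abs{u}^p\abs{x}^{-p}\ud x$.

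For the radial term I will use the one-dimensional weighted Hardy inequality along each ray:
\[
\int_0^\infty\abs{\partial_r f}^p r\ud r\ge\Big|\frac{2-p}{p}\Big|^p\int_0^\infty\abs{f}^p r^{1-p}\ud r
\]
for complex $f\in C_c^\infty(0,\infty)$. The support away from $0$ is essential here, since for $p>2$ the weight is singular at the origin in the other direction. Its proof is standard: write $\abs{f}^p r^{1-p}=\frac{1}{2-p}\abs{f}^p\,\frac{d}{dr}r^{2-p}$ and integrate by parts. There are no boundary terms because of the compact support in $(0,\infty)$. Then use $\partial_r\abs{f}\le\abs{\partial_r f}$ and Hölder with exponents $p,p'$; the inequality is trivial when $p=2$. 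Applying it for each $\varphi$, integrating over $\varphi$, and adding the angular estimate gives \eqref{eq:allp}, since $\abs{2-p}/p=(p-2)/p$.

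The main work has already been done in Theorem \ref{thm:angular}. The only steps needing care here are the sign and convention of $\beta$ in the polar form, and checking that the one-dimensional Hardy inequality holds with constant $((p-2)/p)^p$ for $p>2$ on functions supported away from the origin. I expect the latter to be the mild obstacle, specifically making sure the integration by parts and the handling of the modulus are valid for complex-valued $f$.
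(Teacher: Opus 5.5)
Your proposal is correct and follows essentially the same route as the paper: the superadditivity $(X^2+Y^2)^{p/2}\ge X^p+Y^p$ for $p\ge 2$ splits the integrand into $R(u)+\Phi(u)$, the angular part is handled slice by slice via the lower bound of Theorem \ref{thm:angular} (Lemma \ref{lem:slicing}), and the radial part by the one-dimensional weighted Hardy inequality on rays (Lemma \ref{lem:radial}). The only difference is that you sketch the proof of that radial inequality for $p>2$ on functions supported away from the origin, including the treatment of the modulus, whereas the paper simply cites it as standard.
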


At $p=2$ the constants of \eqref{eq:main} and \eqref{eq:allp} both become $\big(\tfrac{\sin(\pi\theta)}{\pi}\big)^2$, reproducing the sharp Laptev--Weidl constant $\theta^{2}$ up to the universal factor $\big(\tfrac{\sin(\pi\theta)}{\pi\theta}\big)^2\in[(2/\pi)^{2},1)$, which moreover tends to $1$ as $\theta\to0$  (the regime in which Theorem \ref{thm:CKLL14} is most delicate.) 
Note that the constant $\big(\sqrt{(2-p)^{2}+\beta^{2}p^{2}}/p\big)^{p}$ obtained  in \cite[Theorem 1.5]{CKLL} grows without bound in $\abs{\beta}$ and 
yields \eqref{eq:hardyAB} only for real-valued $u$. By contrast, \eqref{eq:main} holds for all complex-valued test functions, with a constant that is $1$-periodic in $\beta$. In physical terms, the $1$-periodicity in $\beta$ of all the constants appearing here expresses flux quantisation in the sense of Byers and Yang \cite{BY}.
 The obtained lower bounds may be also regarded as an $L^p$-analogue, in the continuum, of frustration-index lower bounds for eigenvalues of magnetic Laplacians on graphs (see e.g. \cite{LLPP}).

This note has a simple structure. Section \ref{sec:angular} contains the proof of Theorem \ref{thm:angular}. Section \ref{sec:reduction} presents the reduction of Theorem \ref{thm:main} to Theorem \ref{thm:angular} following the scheme of \cite{CKLL}, together with the short proof of Theorem \ref{cor:allp}. In Section \ref{sec:complexAB},  we extend our results to the complex AB potential.

\section{Proof of Theorem \ref{thm:angular}}\label{sec:angular}

Throughout this section $1<p<\infty$ and $\theta:=\dist(\beta,\Z)\in[0,\tfrac12]$. If $\beta\in\Z$ both sides of \eqref{eq:twosided} vanish, which is periodic since $\beta\in\Z$), so we assume $\beta\notin\Z$, i.e.\ $\theta\in(0,\tfrac12]$.

Let $u\in W^{1,p}(0,2\pi)$ with $u(0)=u(2\pi)$; the pointwise values are well defined by the embedding $W^{1,p}(0,2\pi)\hookrightarrow C[0,2\pi]$.

Define
\[
w(\varphi):=e^{i\beta\varphi}\,u(\varphi),\qquad \varphi\in[0,2\pi].
\]
Then $w\in W^{1,p}(0,2\pi)$, $\abs{w}=\abs{u}$ pointwise, and
\[
w'(\varphi)=e^{i\beta\varphi}\big(\partial_\varphi u+i\beta u\big)(\varphi),
\quad\text{so}\quad
\abs{w'}=\abs{\partial_\varphi u+i\beta u}\ \ \text{a.e.}
\]
Thus the twisted Rayleigh quotient of $u$ equals the free Rayleigh quotient of $w$, at the price that the periodicity of $u$ becomes the twisted boundary condition
\begin{equation}\label{eq:twist}
w(2\pi)=e^{2\pi i\beta}\,w(0).
\end{equation}
Since $u\mapsto w$ is a bijection between the two admissible classes,
\begin{equation}\label{eq:lambdatwist}
\lambda(\beta,p)=\inf\Big\{\tfrac{\int_0^{2\pi}\abs{w'}^{p}}{\int_0^{2\pi}\abs{w}^{p}} : w\in W^{1,p}(0,2\pi)\setminus\{0\}, w(2\pi)=e^{2\pi i\beta}w(0) \Big\}.
\end{equation}

Extend $w$ to $\R$ by
\[
w(\varphi+2\pi k):=e^{2\pi i\beta k}\,w(\varphi),\quad \varphi\in[0,2\pi),\ k\in\Z .
\]
By \eqref{eq:twist} the two one-sided values at each junction point $2\pi k$ agree, so $w\in W^{1,p}_{\mathrm{loc}}(\R)\cap C(\R)$, and by construction
\begin{equation}\label{eq:quasi}
w(\varphi+2\pi)=e^{2\pi i\beta}w(\varphi),\; \forall\varphi\in\R,
\quad
\abs{w'(\varphi+2\pi)}=\abs{w'(\varphi)}\ \text{for a.e.\ }\varphi\in\R .
\end{equation}
In particular $\abs{w}$ and $\abs{w'}$ are $2\pi$-periodic, so for every $a\in\R$,
\begin{equation}\label{eq:periodint}
\int_{a}^{a+2\pi}\abs{w'}\ud s=\int_{0}^{2\pi}\abs{w'}\ud s.
\end{equation}

By \eqref{eq:quasi}, for every $\varphi\in\R$,
\begin{equation}\label{eq:osc}
\abs{\,w(\varphi+2\pi)-w(\varphi)\,}
=\abs{e^{2\pi i\beta}-1}\,\abs{w(\varphi)} 
=2\sin(\pi\theta)\,\abs{w(\varphi)},
\end{equation}
where we used $\abs{e^{i\alpha}-1}=2\abs{\sin(\alpha/2)}$ and the identity $\abs{\sin(\pi\beta)}=\sin\big(\pi\dist(\beta,\Z)\big)$, valid because $\abs{\sin(\pi\,\cdot\,)}$ is $1$-periodic and even, and $\sin(\pi\,\cdot\,)$ is increasing on $[0,\tfrac12]$.

Since $w\in W^{1,p}_{\mathrm{loc}}(\R)$ is locally absolutely continuous, for every $\varphi\in\R$,
$$
\abs{w(\varphi+2\pi)-w(\varphi)}
=\Big|\int_{\varphi}^{\varphi+2\pi}w'(s)\ud s\Big|
\le$$$$\int_{\varphi}^{\varphi+2\pi}\abs{w'(s)}\ud s
\overset{\eqref{eq:periodint}}{=}\int_{0}^{2\pi}\abs{w'(s)}\ud s
\le(2\pi)^{1-\frac1p}\Big(\int_{0}^{2\pi}\abs{w'}^{p}\ud s\Big)^{\!\frac1p},
$$
by H\"older's inequality with exponents $\tfrac{p}{p-1}$ and $p$. Combining it with \eqref{eq:osc}, we obtain
\[
\big(2\sin(\pi\theta)\big)^{p}\,\abs{w(\varphi)}^{p}
\;\le\;(2\pi)^{p-1}\int_{0}^{2\pi}\abs{w'}^{p}\ud s
\quad\text{for all }\varphi\in[0,2\pi] .
\]
Integrating over $\varphi\in(0,2\pi)$, we get
\[
\big(2\sin(\pi\theta)\big)^{p}\int_{0}^{2\pi}\abs{w}^{p}\ud\varphi
\;\le\;(2\pi)^{p}\int_{0}^{2\pi}\abs{w'}^{p}\ud\varphi,\]
that is, \[
\frac{\int_{0}^{2\pi}\abs{w'}^{p}}{\int_{0}^{2\pi}\abs{w}^{p}}
\;\ge\;\Big(\frac{\sin(\pi\theta)}{\pi}\Big)^{p}.
\]
Taking the infimum over admissible $w$ in \eqref{eq:lambdatwist} proves the lower bound in \eqref{eq:twosided}. Moreover, since $\sin(\pi\theta)\ge 2\theta$ for $\theta\in[0,\tfrac12]$, we arrive at the lower bound in \eqref{eq:comparable}. 

Now we provide the upper bound.
Test \eqref{eq:lambdadef} with the periodic exponentials $u_n(\varphi):=e^{in\varphi}$, $n\in\Z$: then $\partial_\varphi u_n+i\beta u_n=i(n+\beta)u_n$ and $\abs{u_n}\equiv1$, so the Rayleigh quotient equals $\abs{n+\beta}^{p}$. Minimising over $n\in\Z$,
\[
\lambda(\beta,p)\le\min_{n\in\Z}\abs{n+\beta}^{p}=\dist(\beta,\Z)^{p}=\theta^{p}.
\]
This proves the upper bound in \eqref{eq:twosided} and completes the proof of Theorem \ref{thm:angular}. \qed

\section{Proofs of Theorem \ref{thm:main} and Theorem \ref{cor:allp}}\label{sec:reduction}

For the convenience of the reader we present the reduction used in the proof of \cite[Theorem 1.4]{CKLL}.
Lemmas \ref{lem:revmink}, \ref{lem:radial} and \ref{lem:slicing} below are standard.  The only non-explicit ingredient of \cite{CKLL} (the positivity of $\lambda(\beta,p)$) is replaced by Theorem \ref{thm:angular}.

Fix $u\in C_c^{\infty}(\R^{2})$ and pass to polar coordinates $x=(r\cos\varphi,r\sin\varphi)$. Write $u=u(r,\varphi)$ and let $(\hat e_r,\hat e_\varphi)$ denote the moving orthonormal frame. The potential \eqref{eq:AB} is angular 
$A_\beta=-\tfrac{\beta}{r}\,\hat e_\varphi$  and
 $\nabla u=\partial_r u\,\hat e_r+r^{-1}\partial_\varphi u\,\hat e_\varphi$. Hence 
\[
\nabla_{A_\beta}u
=\big(\partial_r u\big)\,\hat e_r+\Big(\frac{\partial_\varphi u-i\beta u}{r}\Big)\,\hat e_\varphi ,
\]
where the sign of $\beta$ is immaterial for all constants. Thus, pointwise on $\R^2\setminus\{0\}$,
\begin{equation}\label{eq:polar}
\abs{\nabla_{A_\beta}u}^{2}
=\abs{\partial_r u}^{2}+\frac{\abs{\partial_\varphi u-i\beta u}^{2}}{r^{2}},
\end{equation}
and therefore
\begin{equation}\label{eq:polarint}
\int_{\R^2}\abs{\nabla_{A_\beta}u}^{p}\ud x
=\int_{0}^{\infty}\!\!\int_{0}^{2\pi}
\Big(\abs{\partial_r u}^{2}+\tfrac{\abs{\partial_\varphi u-i\beta u}^{2}}{r^{2}}\Big)^{\!p/2}
\ud\varphi\, r\ud r .
\end{equation}
We will use the following two marginal quantities/notations 
\[
R(u):=\int_0^\infty\!\!\int_0^{2\pi}\abs{\partial_r u}^{p}\ud\varphi\,r\ud r,
\quad
\Phi(u):=\int_0^\infty\!\!\int_0^{2\pi}\frac{\abs{\partial_\varphi u-i\beta u}^{p}}{r^{p}}\ud\varphi\,r\ud r .
\]

Both are finite for $1<p<2$: $R(u)$ obviously, and $\Phi(u)$ because near the origin $\partial_\varphi u-i\beta u=i\beta u(0)+O(r)$, so the integrand is $O(r^{1-p})$ with $1-p>-1$.

\subsection{Three standard lemmas}

\begin{lemma}\label{lem:revmink}
Let $(\Omega,\mu)$ be a measure space, $0<q<1$, and let $f,g\colon\Omega\to[0,\infty]$ 
be measurable with $0<\norm{f+g}_{q}<\infty$. Then
\begin{equation}\label{eq:revmink}
\norm{f+g}_{q}\ge\norm{f}_{q}+\norm{g}_{q}.
\end{equation}
\end{lemma}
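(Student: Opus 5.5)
The plan is to prove the reverse Minkowski inequality \eqref{eq:revmink} by duality with the reverse H\"older inequality, imitating the usual proof of Minkowski's inequality for $q\ge1$. First I dispose of degenerate cases. If $\norm{f}_q=0$ then $f=0$ a.e. and the claim is trivial, and similarly for $g$. Also, since $0\le f\le f+g$ and $\norm{f+g}_q<\infty$, both $\norm{f}_q$ and $\norm{g}_q$ are finite. So I may assume both are positive and finite. In this case $f+g$ is finite a.e. and positive on a set of positive measure.

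The quickest route I would try first is concavity. The map $\Psi(a,b)=(a^{1/q}+b^{1/q})^{q}$ on $[0,\infty)^2$ is concave, because it is positively $1$-homogeneous and its unit "ball" $\{a^{1/q}+b^{1/q}\ge 1\}$ is convex when $1/q>1$. Equivalently, $(a,b)\mapsto\norm{(a,b)}_{\ell^{1/q}}$ is a norm, and the reverse inequality is Minkowski in $\ell^{1/q}$.

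Set $F=f^q$ and $G=g^q$. Then $\norm{f+g}_q^q=\int (F^{1/q}+G^{1/q})^q\ud\mu=\int\Psi(F,G)\ud\mu$.

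The key step is to show that a concave, positively homogeneous $\Psi$ satisfies $\int\Psi(F,G)\ud\mu\le\Psi\big(\int F,\int G\big)$ fails in the wrong direction. What we actually need is $\int \norm{(F,G)}_{\ell^{s}}\ud\mu\ge\norm{(\int F,\int G)}_{\ell^{s}}$ with $s=1/q>1$. This is precisely the triangle inequality for the $\ell^{s}$ norm applied to an integral of vectors: the norm of an integral is at most the integral of the norm. That gives
\[
\int (F^{s}+G^{s})^{1/s}\ud\mu\ge\Big(\big(\textstyle\int F\big)^{s}+\big(\int G\big)^{s}\Big)^{1/s}.
\]
Rewriting in terms of $f$ and $g$, this reads $\norm{f+g}_q^q\ge(\norm{f}_q^{1/q\cdot q\cdot s}\cdots)$. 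Let me recompute: $\int F=\norm{f}_q^q$, so the right side is $\big(\norm{f}_q+\norm{g}_q\big)^{q}$ after using $s=1/q$. Taking $q$-th roots then gives \eqref{eq:revmink}.

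The main obstacle is justifying "norm of integral $\le$ integral of norm" for possibly infinite measure spaces with only measurability. To avoid it, I would instead use the dual representation $\norm{(a,b)}_{\ell^s}=\sup\{\alpha a+\gamma b:\alpha,\gamma\ge0,\ \alpha^{s'}+\gamma^{s'}\le1\}$. Pick the optimal $(\alpha,\gamma)$ for $a=\int F$ and $b=\int G$, both finite. Integrate the pointwise bound $(F^s+G^s)^{1/s}\ge\alpha F+\gamma G$. This yields the claim directly, with all quantities finite and nonnegative.
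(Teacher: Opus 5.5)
Your argument is correct, and it supplies a proof that the paper does not give. The paper only identifies Lemma~\ref{lem:revmink} as the standard reversed form of Minkowski's inequality for integrals, as used in \cite{CKLL}.

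\textbf{Your route.} You substitute $F=f^q$, $G=g^q$, $s=1/q>1$, so that pointwise $(f+g)^q=\norm{(F,G)}_{\ell^s}$. The lemma then becomes the \emph{forward} Minkowski integral inequality $\int\norm{(F,G)}_{\ell^s}\ud\mu\ge\norm{(\int F\ud\mu,\int G\ud\mu)}_{\ell^s}$ for the $\ell^s$ norm on a two-point space. You prove this as follows:
\begin{itemize}
\item Take a norming functional for the finite vector $(a,b)=(\norm{f}_q^q,\norm{g}_q^q)$. For $(a,b)\neq0$ it is explicitly $\alpha=a^{s-1}/\norm{(a,b)}_s^{s-1}$, $\gamma=b^{s-1}/\norm{(a,b)}_s^{s-1}$, with $\alpha^{s'}+\gamma^{s'}=1$.
\item Integrate the pointwise H\"older bound $(F^s+G^s)^{1/s}\ge\alpha F+\gamma G$.
\item Since $a^s+b^s=\norm{f}_q+\norm{g}_q$, this yields $\norm{f+g}_q^q\ge(\norm{f}_q+\norm{g}_q)^q$, exactly as you recomputed.
\end{itemize}

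\textbf{What this buys.} Compare it with the textbook argument you mention at the start: split $\int(f+g)^q=\int f(f+g)^{q-1}+\int g(f+g)^{q-1}$ and apply reverse H\"older with the negative conjugate exponent $q/(q-1)$. Your version uses only linearity and monotonicity of the integral of nonnegative functions. So it avoids the bookkeeping about the set $\{f+g=0\}$, infinite values, and division by $\int(f+g)^q$. It also shows that the hypothesis $\norm{f+g}_q>0$ is not needed.

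\textbf{A correction to your write-up.} The claim that $\Psi(a,b)=(a^{1/q}+b^{1/q})^q$ is concave, with convex set $\{a^{1/q}+b^{1/q}\ge1\}$, is false. $\Psi$ is the $\ell^{1/q}$ norm, hence convex. That set is not convex: $(1,0)$ and $(0,1)$ lie in it, but their midpoint does not, since $2^{1-1/q}<1$.

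What your argument actually uses is sublinearity of $\Psi$ (convexity plus positive homogeneity), i.e.\ $\Psi$ is a supremum of nonnegative linear forms. That is what gives $\int\Psi(F,G)\ud\mu\ge\Psi(\int F\ud\mu,\int G\ud\mu)$ with no normalisation of $\mu$. Delete the concavity sentence and the garbled ``fails in the wrong direction'' sentence, and present the final duality paragraph as the proof. That paragraph already replaces the earlier informal appeal to ``norm of an integral is at most the integral of the norm'', as you intended.
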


Lemma \ref{lem:revmink} is the case $0<q<1$ of Minkowski's inequality for integrals, in which the inequality sign is reversed. It is precisely the inequality invoked in the proof of \cite[Theorem 1.4]{CKLL}.

\begin{lemma}\label{lem:radial}
For $1<p<2$ with $u\in C_c^{\infty}(\R^{2})$, and for $2\leq p<\infty$ with $u\in C_c^{\infty}(\R^{2}\setminus\{0\})$, we have 
\[
R(u)=\int_{\R^{2}}\Big|\frac{x}{\abs x}\cdot\nabla u\Big|^{p}\ud x
\;\ge\;\Big(\frac{|2-p|}{p}\Big)^{p}\int_{\R^{2}}\frac{\abs{u}^{p}}{\abs{x}^{p}}\ud x .
\]
\end{lemma}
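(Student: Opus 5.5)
We prove Lemma \ref{lem:radial} by reducing it, ray by ray, to the one-dimensional weighted Hardy inequality. The equality $R(u)=\int_{\R^2}\abs{\tfrac{x}{\abs x}\cdot\nabla u}^p\ud x$ is immediate in polar coordinates, since $\tfrac{x}{\abs x}\cdot\nabla u=\partial_r u$ and $\ud x=r\ud r\ud\varphi$. It therefore suffices to show that, for each fixed $\varphi\in[0,2\pi)$, the function $f(r):=u(r,\varphi)$ (complex valued) satisfies
\[
\int_0^\infty\abs{f'(r)}^p r\ud r\;\ge\;\Big(\frac{\abs{2-p}}{p}\Big)^p\int_0^\infty\frac{\abs{f(r)}^p}{r^p}\,r\ud r .
\]
Integrating this in $\varphi$ then gives the lemma. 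The case $p=2$ is trivial, since the constant is $0$, so we assume $p\ne2$.

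To prove the one-dimensional inequality we use the integration-by-parts (divergence) trick. Set $\gamma:=2-p$. Because $\tfrac{\ud}{\ud r}r^{\gamma}=\gamma r^{1-p}$, we can write
\[
\gamma\int_0^\infty\abs{f}^p r^{1-p}\ud r=\int_0^\infty\abs{f}^p\,\ud(r^{\gamma})=\big[\abs f^p r^{\gamma}\big]_0^\infty-p\int_0^\infty\abs f^{p-2}\Real(\bar f f')\,r^{\gamma}\ud r .
\]
Here we use that $\abs f^p$ is $C^1$ for $p>1$, with derivative $p\abs f^{p-2}\Real(\bar f f')$.

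Next we check that the boundary terms vanish. At $r=\infty$ this holds because $f$ has compact support. At $r=0$ the two ranges behave differently. For $1<p<2$ we have $\gamma>0$ and $f$ is bounded near $0$, since $u\in C_c^\infty(\R^2)$, so $\abs f^p r^\gamma\to0$. For $p>2$ we have $\gamma<0$, but $f\equiv0$ near $0$ because $u$ vanishes near the origin. This is exactly why the two hypotheses in the lemma are split as they are. The same reasoning shows that every integral above is finite: for $p<2$ the integrand $\abs f^p r^{1-p}$ is integrable near $0$ because $1-p>-1$.

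We then obtain
\[
\abs{\gamma}\int_0^\infty\abs f^p r^{1-p}\ud r\le p\int_0^\infty \abs f^{p-1}r^{(1-p)(p-1)/p}\,\abs{f'}\,r^{1/p}\ud r .
\]
To see the weights, note that $r^{\gamma}=r^{(1-p)(p-1)/p}\cdot r^{1/p}$. Indeed, the exponents add up to $\tfrac{(1-p)(p-1)+1}{p}=\tfrac{2p-p^2}{p}=2-p$. We apply H\"older's inequality with exponents $\tfrac p{p-1}$ and $p$ to get
\[
\abs\gamma\, I\le p\, I^{(p-1)/p}\Big(\int_0^\infty\abs{f'}^p r\ud r\Big)^{1/p},\qquad I:=\int_0^\infty\abs f^p r^{1-p}\ud r .
\]
If $I=0$ there is nothing to prove. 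Otherwise we divide by $I^{(p-1)/p}$ and raise to the power $p$, which yields the claimed constant $\big(\abs{2-p}/p\big)^p$.

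The main point needing care is the boundary term at the origin, together with the finiteness of $I$ when $p<2$ and $u(0)\neq0$. Both are handled by the estimate $\abs f^p r^{\gamma}=O(r^{2-p})$ and by the integrability of $r^{1-p}$ near $0$. The complex-valued case adds no difficulty, since $\abs{\Real(\bar f f')}\le\abs f\abs{f'}$.
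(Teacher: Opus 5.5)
Your proof is correct and follows the same route as the paper, which simply invokes the classical one-dimensional weighted Hardy inequality $\int_0^\infty\abs{f'}^p r\ud r\ge\big(\tfrac{\abs{2-p}}{p}\big)^p\int_0^\infty\abs{f}^p r^{1-p}\ud r$ on each ray (as in \cite{CKLL}). You supply the standard integration-by-parts and H\"older proof of that inequality, and your handling of the boundary term $\abs{f}^p r^{2-p}$ at the origin correctly explains why the hypotheses on $u$ are split at $p=2$.
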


Lemma \ref{lem:radial} is the classical one-dimensional weighted Hardy inequality applied on each ray, as in \cite{CKLL}. Or it can be viewed as a particular case of a more general radial setting (see e.g. \cite[Chapter 2]{RS}). 

\begin{lemma}\label{lem:slicing}
For $1<p<\infty$, $\beta\in\R$ and $u\in C_c^\infty(\R^2\setminus\{0\})$, we have 
\[
\Phi(u)\;\ge\;\lambda(\beta,p)\int_{\R^{2}}\frac{\abs u^{p}}{\abs x^{p}}\ud x .
\]
\end{lemma}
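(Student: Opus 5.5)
The plan is to slice in the radial variable and apply the definition \eqref{eq:lambdadef} of $\lambda(\beta,p)$ on each circle $\{\abs{x}=r\}$. Fix $u\in C_c^\infty(\R^2\setminus\{0\})$ and write $u=u(r,\varphi)$ in polar coordinates. Since $u$ vanishes near the origin and outside a compact set, only finitely many radii $r\in(a,b)$ with $0<a<b<\infty$ contribute. For each fixed $r>0$ the function $v_r(\varphi):=u(r,\varphi)$ is smooth and $2\pi$-periodic, so $v_r\in W^{1,p}(0,2\pi)$ with $v_r(0)=v_r(2\pi)$. It is therefore an admissible competitor in \eqref{eq:lambdadef}, or trivially fine if $v_r\equiv0$.

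The one point needing care is the sign convention. In \eqref{eq:polar} the angular component is $\partial_\varphi u-i\beta u$, whereas \eqref{eq:lambdadef} involves $\partial_\varphi u+i\beta u$. I would handle this by noting that $\lambda(\beta,p)=\lambda(-\beta,p)$. One way is complex conjugation: if $v$ is admissible then so is $\bar v$, and $\abs{\partial_\varphi\bar v+i\beta\bar v}=\abs{\partial_\varphi v-i\beta v}$. Alternatively, Theorem~\ref{thm:angular} is stated only in terms of $\dist(\beta,\Z)$, which is even in $\beta$. Either way, for every $r>0$ we get
\[
\int_0^{2\pi}\abs{\partial_\varphi u(r,\varphi)-i\beta u(r,\varphi)}^p\ud\varphi\ \ge\ \lambda(\beta,p)\int_0^{2\pi}\abs{u(r,\varphi)}^p\ud\varphi .
\]

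Multiplying by $r^{1-p}$ and integrating over $r\in(0,\infty)$, Tonelli's theorem gives
\[
\Phi(u)=\int_0^\infty r^{-p}\Big(\int_0^{2\pi}\abs{\partial_\varphi u-i\beta u}^p\ud\varphi\Big) r\ud r\ \ge\ \lambda(\beta,p)\int_0^\infty\!\!\int_0^{2\pi}\frac{\abs u^p}{r^p}\ud\varphi\, r\ud r .
\]
The right-hand integral equals $\int_{\R^2}\abs u^p\abs x^{-p}\ud x$ by the polar change of variables. All integrals are finite because the support avoids the origin, and the integrands are nonnegative, so no convergence issues arise.

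There is no real obstacle here. The only steps that require attention are the sign and conjugation remark above and checking admissibility of each slice $v_r$.
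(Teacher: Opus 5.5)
Your proposal is correct and is essentially the paper's proof: you apply the definition of $\lambda(\beta,p)$ to each smooth $2\pi$-periodic slice $\varphi\mapsto u(r,\varphi)$, multiply by $r^{1-p}$, and integrate in $r$. Your conjugation argument giving $\lambda(\beta,p)=\lambda(-\beta,p)$ makes explicit the sign mismatch between $\Phi$ and \eqref{eq:lambdadef} that the paper passes over. Keep that route rather than the alternative appeal to Theorem~\ref{thm:angular}, which only shows that the bounds are even in $\beta$, not $\lambda$ itself.
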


\begin{proof}
For every fixed $r>0$ the function $\varphi\mapsto u(r,\varphi)$ is smooth and $2\pi$-periodic, hence admissible in \eqref{eq:lambdadef}, so
\[
\int_{0}^{2\pi}\abs{\partial_\varphi u(r,\varphi)-i\beta u(r,\varphi)}^{p}\ud\varphi
\ge\lambda(\beta,p)\int_{0}^{2\pi}\abs{u(r,\varphi)}^{p}\ud\varphi .
\]
Multiplying by $r^{1-p}$ and integrating over $r\in(0,\infty)$ gives the claim.
\end{proof}

\subsection{Proof of Theorem \ref{thm:main}}
Since $1<p<2$ we have $0<p/2<1$. Apply Lemma \ref{lem:revmink} on the measure space $\big((0,\infty)\times(0,2\pi),\,r\ud r\ud\varphi\big)$ with $q=p/2$ to the nonnegative functions $f=\abs{\partial_r u}^{2}$ and $g=r^{-2}\abs{\partial_\varphi u+i\beta u}^{2}$; by \eqref{eq:polarint},
\[
\Big(\int_{\R^2}\abs{\nabla_{A_\beta}u}^{p}\ud x\Big)^{2/p}
=\norm{f+g}_{p/2}
\ge\norm{f}_{p/2}+\norm{g}_{p/2}
=R(u)^{2/p}+\Phi(u)^{2/p},
\]
that is,
\begin{equation}\label{eq:afterminkowski}
\int_{\R^2}\abs{\nabla_{A_\beta}u}^{p}\ud x\ge
\Big[R(u)^{2/p}+\Phi(u)^{2/p}\Big]^{p/2}.
\end{equation}
Inserting Lemmas \ref{lem:radial} and \ref{lem:slicing} and using that $t\mapsto t^{2/p}$ is increasing, we obtain
\begin{equation}\label{eq:reduced}
\int_{\R^2}\abs{\nabla_{A_\beta}u}^{p}\ud x
\ge
\Big[\Big(\frac{2-p}{p}\Big)^{2}+\lambda(\beta,p)^{2/p}\Big]^{p/2}
\int_{\R^{2}}\frac{\abs u^{p}}{\abs x^{p}}\ud x .
\end{equation}
This is exactly the point reached in the proof of \cite[Theorem 1.4]{CKLL}, where the argument concludes with the qualitative statement $\lambda(\beta,p)>0$. By the lower bound of Theorem \ref{thm:angular}, $\lambda(\beta,p)^{2/p}\ge\big(\sin(\pi\theta)/\pi\big)^{2}$, inequality \eqref{eq:main} follows. 
 \qed

\subsection{Proof of Theorem \ref{cor:allp}}
Let $2\leq p<\infty$ and $u\in C_c^\infty(\R^2\setminus\{0\})$; then $\Phi(u)<\infty$ because $u$ vanishes near the origin. By \eqref{eq:polar}, pointwise
$\abs{\nabla_{A_\beta}u}\ge r^{-1}\abs{\partial_\varphi u-i\beta u}$,
whence applying both Lemma \ref{lem:slicing} and Lemma \ref{lem:radial}, we obtain, for $u\in C_c^\infty(\R^2\setminus\{0\})$,
$$
\int_{\R^2}|\nabla_{A_\beta}u|^p\,dx
\ge R(u)+\Phi(u)
\ge
\left[
\left(\frac{p-2}{p}\right)^p+
\lambda(\beta,p)
\right]
\int_{\R^2}\frac{|u|^p}{|x|^p}\,dx
$$$$\ge
\left[
\left(\frac{p-2}{p}\right)^p+
\left(\frac{\sin(\pi\theta)}{\pi}\right)^p
\right]
\int_{\R^2}\frac{|u|^p}{|x|^p}\,dx.
$$
Here we have used the fact 
$(X^2+Y^2)^{p/2}\ge X^p+Y^p,$
for $p\ge2.$
\qed

\section{The complex AB potential}
\label{sec:complexAB}

In this section, we apply the compactness-free approach  (of Theorem~\ref{thm:main}) 
to the complex AB potential,
introduced by Krej\v{c}i\v{r}\'ik \cite{Kre19}. In the linear case
$p=2$ treated in \cite[Theorem~2]{Kre19}, the lower bound $c_\infty\ge\lambda_\alpha$
furnished by its proof is known, with $\lambda_\alpha=\dist(\alpha,\Z)^2$,
because the momentum operator on the circle is normal and diagonalises in the
Fourier basis. Therefore,  the point of interest is the nonlinear regime
$p\neq2$, where diagonalisation is unavailable. Our obtained $L^{p}-$Hardy inequalities for the complex AB potential appear to be new.

Following \cite[eq.~(5)]{Kre19}, let $\alpha\in\C$ and consider the complex AB
potential
\begin{equation}\label{eq:cAB}
   \ab(x) \;=\; (-x_2,x_1)\,\frac{\alpha}{|x|^2},
   \quad \alpha=\mu+i\nu,\; \mu:=\Real\alpha,\ \nu:=\Imag\alpha,
\end{equation}
acting through the magnetic gradient $\nabla_{\ab}u:=\nabla u-i\ab u$ on
complex-valued functions. One has $\ab=\alpha\,\hat e_\phi/r$ in polar coordinates
$x=(r\cos\phi,r\sin\phi)$, whence $$\nabla_{\ab}u=\partial_r u\,\hat e_r+
r^{-1}(\partial_\phi u-i\alpha u)\hat e_\phi.$$ Since $\hat e_r$ and $\hat e_\phi$ are
real orthonormal vectors, taking moduli gives, pointwise on
$\R^2\setminus\{0\}$,
\begin{equation}\label{eq:split}
   |\nabla_{\ab}u|^2
   \;=\; |\partial_r u|^2+\frac{|\partial_\phi u-i\alpha u|^2}{r^2},
\end{equation}
in analogy with \eqref{eq:polar}. The complex value of $\alpha$ is immaterial for
\eqref{eq:split}. Thus, the whole polar reduction of Section~\ref{sec:reduction} is available
verbatim, provided the real angular constant $\lambda(\beta,p)$ of \eqref{eq:lambdadef} is
replaced by its complex counterpart
\begin{equation}\label{eq:lambdaC}
   \lambda(\alpha,p)
   :=
   \inf_{\substack{u\in W^{1,p}(0,2\pi)\setminus\{0\}\\ u(0)=u(2\pi)}}
   \frac{\displaystyle\int_0^{2\pi}\bigl|\partial_\phi u-i\alpha u\bigr|^p\,d\phi}
        {\displaystyle\int_0^{2\pi}|u|^p\,d\phi},
   \quad \alpha\in\C.
\end{equation}
For real $\alpha=-\beta$ this reduces to $\lambda(\beta,p)$ of \eqref{eq:lambdadef}. Throughout,
$\dist(\alpha,\Z)$ denotes the distance in $\C$, so that
$\dist(\alpha,\Z)^2=\dist(\mu,\Z)^2+\nu^2$.

Now we are in the position to state the complex analogue of Theorem~\ref{thm:angular}. 
\begin{theorem}\label{thm:C}
Let $1<p<\infty$ and $\alpha=\mu+i\nu\in\C\setminus\Z$. Then
\begin{equation}\label{eq:mainC}
   \left(\frac{|\nu|}{\sinh(\pi|\nu|)}\right)^{\!p}
   \bigl(\sin^2(\pi\mu)+\sinh^2(\pi\nu)\bigr)^{p/2}
   \le \lambda(\alpha,p)\le
 \dist(\alpha,\Z)^p .
\end{equation}
\end{theorem}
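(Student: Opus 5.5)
The plan is to repeat the twisting argument from the proof of Theorem~\ref{thm:angular}, now with a complex phase. The new feature is that the twist is no longer unimodular: it produces an exponential weight. I will control that weight with Minkowski's integral inequality rather than with H\"older's inequality at a single point, because Minkowski gives exactly the factor $|\nu|/\sinh(\pi|\nu|)$.

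\emph{Upper bound.} Test \eqref{eq:lambdaC} with $u_n(\phi)=e^{in\phi}$, $n\in\Z$. Then $\partial_\phi u_n-i\alpha u_n=i(n-\alpha)u_n$ and $|u_n|\equiv1$, so the Rayleigh quotient equals $|n-\alpha|^p$. Minimising over $n\in\Z$ gives $\lambda(\alpha,p)\le\dist(\alpha,\Z)^p$.

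\emph{Lower bound, step 1 (twisting).} Fix an admissible $u$, extend it $2\pi$-periodically to $\R$, and set $g:=\partial_\phi u-i\alpha u$, which is also $2\pi$-periodic. Define $w(\phi):=e^{-i\alpha\phi}u(\phi)$. Then
\[
w'=e^{-i\alpha\phi}g,\qquad |w(\phi)|=e^{\nu\phi}|u(\phi)|,\qquad |w'(\phi)|=e^{\nu\phi}|g(\phi)|,
\]
and $w(\phi+2\pi)=e^{-2\pi i\alpha}w(\phi)$. Writing $z=-2\pi i\alpha=2\pi\nu-2\pi i\mu$ and using $|e^{z}-1|^2=4e^{\Real z}\bigl(\sinh^2(\Real z/2)+\sin^2(\Imag z/2)\bigr)$, we get
\[
|w(\phi+2\pi)-w(\phi)|=2e^{\pi\nu}S\,e^{\nu\phi}|u(\phi)|,\qquad S:=\bigl(\sin^2(\pi\mu)+\sinh^2(\pi\nu)\bigr)^{1/2}.
\]

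\emph{Step 2 (fundamental theorem of calculus).} The same difference is also bounded by
\[
\Bigl|\int_\phi^{\phi+2\pi}w'\Bigr|\le\int_0^{2\pi}e^{\nu(\phi+t)}|g(\phi+t)|\ud t .
\]
Cancelling the common factor $e^{\nu\phi}$ yields the pointwise estimate
\[
2e^{\pi\nu}S\,|u(\phi)|\le\int_0^{2\pi}e^{\nu t}|g(\phi+t)|\ud t .
\]

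\emph{Step 3 (Minkowski).} Take $L^p(0,2\pi)$ norms in $\phi$ and apply Minkowski's integral inequality to the right-hand side. By periodicity, $\|g(\cdot+t)\|_{L^p(0,2\pi)}=\|g\|_{L^p(0,2\pi)}$ for every $t$, so
\[
2e^{\pi\nu}S\,\|u\|_p\le\frac{e^{2\pi\nu}-1}{\nu}\,\|g\|_p,
\]
which is the same as
\[
\frac{\|g\|_p}{\|u\|_p}\ge\frac{\nu S}{\sinh(\pi\nu)}.
\]
Raising this to the power $p$ and taking the infimum over admissible $u$ gives the left inequality of \eqref{eq:mainC}.

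\emph{Edge cases.} The function $\nu/\sinh(\pi\nu)$ is even, so the same formula covers $\nu<0$ and may be written with $|\nu|$. For $\nu=0$ the quotient is read as its limit $1/\pi$, and the bound reduces to Theorem~\ref{thm:angular}.

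The main obstacle I anticipate is the non-periodicity of $|w'|$. The H\"older step used in the real case would produce a worse $p$-dependent constant here. Averaging the pointwise inequality through Minkowski's inequality is what removes this problem and gives exactly the stated constant.
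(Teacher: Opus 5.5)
Your proof is correct and is essentially the paper's argument in different notation. The identity $(e^{-2\pi i\alpha}-1)\,u(\phi)=\int_0^{2\pi}e^{-i\alpha t}g(\phi+t)\ud t$ behind your Step 2 is exactly the Green-kernel representation $u=T_\alpha g$ of \eqref{eq:green}, written as a convolution on the circle, and your use of Minkowski's integral inequality is Schur's test for this translation-invariant kernel, yielding the same constant $M(\alpha)=\sinh(\pi|\nu|)/\bigl(|\nu|\,(\sin^2\pi\mu+\sinh^2\pi\nu)^{1/2}\bigr)$.
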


\begin{proof}
Since $\alpha\notin\Z$, the operator
$D_\alpha:=\partial_\phi-i\alpha$ with periodic boundary conditions is a
bijection from $\{u\in W^{1,p}(0,2\pi):u(0)=u(2\pi)\}$ onto $L^p(0,2\pi)$. Set
$T_\alpha:=D_\alpha^{-1}$. Because $u\mapsto g:=D_\alpha u$ is a bijection
between the two admissible classes, \eqref{eq:lambdaC} can be rewritten as
\begin{equation}\label{eq:normrep}
   \lambda(\alpha,p)
   =\inf_{g\neq0}\frac{\|g\|_p^p}{\|T_\alpha g\|_p^p}
   =\|T_\alpha\|_{L^p\to L^p}^{-p}.
\end{equation}
Solving $u'-i\alpha u=g$ with $u(0)=u(2\pi)$ gives
$$u(\phi)=e^{i\alpha\phi}\bigl(u(0)+\int_0^\phi e^{-i\alpha s}g(s)\,ds\bigr)$$ and
$$u(0)=-(1-e^{-2\pi i\alpha})^{-1}\int_0^{2\pi}e^{-i\alpha s}g(s)\,ds.$$ Hence
$T_\alpha$ is the integral operator $u(\phi)=\int_0^{2\pi}G_\alpha(\phi,s)\,g(s)\,ds$
with the periodic Green kernel
\begin{equation}\label{eq:green}
   G_\alpha(\phi,s)
   = e^{\,i\alpha(\phi-s)}
     \left(\mathbf 1_{\{s<\phi\}}-\frac{1}{1-e^{-2\pi i\alpha}}\right),
   \quad \phi,s\in(0,2\pi).
\end{equation}
We estimate $\|T_\alpha\|_{L^p\to L^p}$ by Schur's test. Using
$|e^{i\alpha t}|=e^{-\nu t}$ and the identity
$|1-e^{-2\pi i\alpha}|=e^{2\pi\nu}\,|e^{2\pi i\alpha}-1|$, the kernel modulus is
\[
   |G_\alpha(\phi,s)|=
   \begin{cases}
     \dfrac{e^{-\nu(\phi-s)}}{|e^{2\pi i\alpha}-1|}, & s<\phi,\\[2ex]
     \dfrac{e^{-\nu(\phi-s)}}{e^{2\pi\nu}\,|e^{2\pi i\alpha}-1|}, & s>\phi.
   \end{cases}
\]
Splitting the $s$-integral at $s=\phi$ and computing the two elementary
integrals, the $\phi$-dependence cancels and one obtains, for every
$\phi\in(0,2\pi)$,
\begin{equation}\label{eq:schur}
   \int_0^{2\pi}|G_\alpha(\phi,s)|\,ds
   =\frac{1}{\nu\,|e^{2\pi i\alpha}-1|}
     \Bigl(1-e^{-\nu\phi}+e^{-\nu\phi}-e^{-2\pi\nu}\Bigr)
   =\frac{1-e^{-2\pi\nu}}{\nu\,|e^{2\pi i\alpha}-1|}.
\end{equation}
Set
$$
M(\alpha)=:\frac{1-e^{-2\pi\nu}}{\nu\,|e^{2\pi i\alpha}-1|}.
$$
A similar computation gives $\int_0^{2\pi}|G_\alpha(\phi,s)|\,d\phi=M(\alpha)$
for every $s$. By Schur's test, $\|T_\alpha\|_{L^p\to L^p}\le M(\alpha)$ for all
$1\le p\le\infty$. Finally, from
$|e^{2\pi i\alpha}-1|^2=e^{-2\pi\nu}\bigl(2\cosh2\pi\nu-2\cos2\pi\mu\bigr)
=4e^{-2\pi\nu}\bigl(\sin^2\pi\mu+\sinh^2\pi\nu\bigr)$ and
$1-e^{-2\pi\nu}=2e^{-\pi\nu}\sinh\pi\nu$ we obtain
\begin{equation}\label{eq:M}
   M(\alpha)=\frac{\sinh(\pi|\nu|)}{|\nu|\,\sqrt{\sin^2(\pi\mu)+\sinh^2(\pi\nu)}}.
\end{equation}
Combining \eqref{eq:normrep} with $\lambda(\alpha,p)\ge M(\alpha)^{-p}$ yields the
lower bound in \eqref{eq:mainC}.

To show the upper bound, we test \eqref{eq:lambdaC} with the exponentials
$u_m(\phi):=e^{im\phi}$, $m\in\Z$. Then
$\partial_\phi u_m-i\alpha u_m=i(m-\alpha)u_m$ and $|u_m|\equiv1$, so the
Rayleigh quotient equals $|m-\alpha|^p$. Minimising over $m\in\Z$ yields
$\lambda(\alpha,p)\le\min_{m}|m-\alpha|^p=\dist(\alpha,\Z)^p$.
\end{proof}

Feeding Theorem~\ref{thm:C} into the polar reduction of Section~\ref{sec:reduction} (which, by
\eqref{eq:split}, applies to the complex potential \eqref{eq:cAB} unchanged)
gives explicit $L^{p}$-Hardy constants/inequalities for the complex AB potential.

\begin{corollary}\label{cor:improved}
Let $1<p<2$, $\alpha=\mu+i\nu\in\C\setminus\Z$. Then for all
$u\in C_c^\infty(\R^2)$,
$$
   \int_{\R^2}|\nabla_{\ab}u|^p\,dx \ge
$$$$
   \left[\Bigl(\frac{2-p}{p}\Bigr)^{2}
   +\Bigl(\frac{|\nu|}{\sinh\pi|\nu|}\Bigr)^{2}
   \bigl(\sin^2\pi\mu+\sinh^2\pi\nu\bigr)\right]^{p/2}
   \int_{\R^2}\frac{|u|^p}{|x|^p}\,dx .$$
\end{corollary}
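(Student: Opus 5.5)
The plan is to repeat the proof of Theorem~\ref{thm:main} word for word, with the complex angular constant \eqref{eq:lambdaC} in place of $\lambda(\beta,p)$. Fix $u\in C_c^\infty(\R^2)$ and pass to polar coordinates. By \eqref{eq:split},
\[
\int_{\R^2}|\nabla_{\ab}u|^p\,dx=\int_0^\infty\!\!\int_0^{2\pi}\Bigl(|\partial_r u|^2+r^{-2}|\partial_\phi u-i\alpha u|^2\Bigr)^{p/2}d\phi\,r\,dr .
\]
Set $f=|\partial_r u|^2$ and $g=r^{-2}|\partial_\phi u-i\alpha u|^2$, and define $\Phi_\alpha(u):=\int\!\!\int r^{-p}|\partial_\phi u-i\alpha u|^p\,d\phi\,r\,dr$. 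Since $0<p/2<1$, Lemma~\ref{lem:revmink} applies with $q=p/2$ and yields
\[
\int|\nabla_{\ab}u|^p\ge\bigl[R(u)^{2/p}+\Phi_\alpha(u)^{2/p}\bigr]^{p/2},
\]
exactly as in \eqref{eq:afterminkowski}. Lemma~\ref{lem:radial} involves no magnetic term, so it gives $R(u)\ge((2-p)/p)^p\int|u|^p|x|^{-p}$ without change.

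For the angular term I repeat Lemma~\ref{lem:slicing}. For each fixed $r>0$, the function $\phi\mapsto u(r,\phi)$ is smooth and $2\pi$-periodic, so it is admissible in \eqref{eq:lambdaC}. Multiplying the resulting inequality by $r^{1-p}$ and integrating in $r$ gives $\Phi_\alpha(u)\ge\lambda(\alpha,p)\int|u|^p|x|^{-p}$. Inserting both bounds and using monotonicity of $t\mapsto t^{2/p}$ produces the analogue of \eqref{eq:reduced}, with $\lambda(\alpha,p)^{2/p}$ in place of $\lambda(\beta,p)^{2/p}$. The lower bound of Theorem~\ref{thm:C} then gives
\[
\lambda(\alpha,p)^{2/p}\ge\Bigl(\tfrac{|\nu|}{\sinh\pi|\nu|}\Bigr)^2\bigl(\sin^2\pi\mu+\sinh^2\pi\nu\bigr),
\]
which is the constant in the corollary. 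When $\nu=0$, the factor $|\nu|/\sinh(\pi|\nu|)$ is understood as its limit $1/\pi$, and the statement then reduces to Theorem~\ref{thm:main}.

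The main obstacle is the technical point that a test function $u\in C_c^\infty(\R^2)$ need not vanish at the origin. This has two consequences.

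\begin{itemize}
\item \emph{Finiteness of $\Phi_\alpha(u)$.} Near $0$ we have $\partial_\phi u-i\alpha u=-i\alpha u(0)+O(r)$, so the integrand is $O(r^{1-p})$. This is integrable because $p<2$, just as in the real case; the fact that $\alpha$ is complex plays no role here.
\item \emph{Lemma~\ref{lem:radial} for functions not vanishing at $0$.} This is the one-dimensional weighted Hardy inequality on each ray with weight $r$. For $1<p<2$ it holds for functions that are smooth up to $r=0$ and compactly supported, and the proof does not depend on the potential.
\end{itemize}

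Finally, Lemma~\ref{lem:revmink} needs $0<\|f+g\|_{p/2}<\infty$. If $\|f+g\|_{p/2}=0$ then $u\equiv 0$ and there is nothing to prove; finiteness follows from the two points above.
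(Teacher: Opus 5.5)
Your proposal is correct and follows the paper's own argument. You apply reverse Minkowski with $q=p/2$ to the splitting \eqref{eq:split}, use Lemma~\ref{lem:radial} for $R(u)$ and the slicing argument with $\lambda(\alpha,p)$ for the angular part, conclude with the lower bound of Theorem~\ref{thm:C}, and make the same $O(r^{1-p})$ finiteness check near the origin. Your extra remarks are sensible clarifications that the paper leaves implicit: you read $|\nu|/\sinh(\pi|\nu|)$ as its limit $1/\pi$ when $\nu=0$, and you note that the slicing step works for $u\in C_c^\infty(\R^2)$ even though Lemma~\ref{lem:slicing} is stated only on $\R^2\setminus\{0\}$.
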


\begin{proof}
As in the proof of Theorem~\ref{thm:main}, we apply the reverse Minkowski inequality
(Lemma~\ref{lem:revmink}) with $q=p/2$ to $f=|\partial_r u|^2$ and $g=r^{-2}|\partial_\phi
u-i\alpha u|^2$ in \eqref{eq:split}, then insert Lemma~\ref{lem:radial} for the radial part
$R(u)$ and similarly Lemma~\ref{lem:slicing} with $\lambda(\alpha,p)$
for the angular part. Near the origin $\partial_\phi u-i\alpha u=-i\alpha
u(0)+O(r)$, so the angular integrand is $O(r^{1-p})$ with $1-p>-1$ and all
quantities are finite for $1<p<2$. And the proof follows from Theorem~\ref{thm:C}. \end{proof}

\begin{corollary}\label{cor:hom}
Let $2\leq p<\infty$, $\alpha=\mu+i\nu\in\C\setminus\Z$. Then for all
$u\in C_c^\infty(\R^2\setminus\{0\})$,
$$
   \int_{\R^2}|\nabla_{\ab}u|^p\,dx
   \ge $$$$
   \left[\Bigl(\frac{p-2}{p}\Bigr)^{p}+\left(\frac{|\nu|}{\sinh(\pi|\nu|)}\right)^{\!p}
   \bigl(\sin^2\pi\mu+\sinh^2\pi\nu\bigr)^{p/2}\right]
   \int_{\R^2}\frac{|u|^p}{|x|^p}\,dx.
$$
\end{corollary}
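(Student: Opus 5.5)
The plan is to mirror the proof of Theorem~\ref{cor:allp}, with the real angular constant $\lambda(\beta,p)$ replaced by the complex constant $\lambda(\alpha,p)$ of \eqref{eq:lambdaC}, and then to insert the lower bound of Theorem~\ref{thm:C}.

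Fix $u\in C_c^\infty(\R^2\setminus\{0\})$ and use polar coordinates. By \eqref{eq:split} we have, pointwise,
\[
|\nabla_{\ab}u|^p=\Bigl(|\partial_r u|^2+r^{-2}|\partial_\phi u-i\alpha u|^2\Bigr)^{p/2}.
\]
Since $p\ge2$, the elementary inequality $(X^2+Y^2)^{p/2}\ge X^p+Y^p$ for $X,Y\ge0$ applies; it follows from the superadditivity of $t\mapsto t^{p/2}$ on $[0,\infty)$. This gives
\[
\int_{\R^2}|\nabla_{\ab}u|^p\,dx\ge R(u)+\Phi_\alpha(u),
\]
where $\Phi_\alpha$ denotes $\Phi$ with $\partial_\phi u-i\beta u$ replaced by $\partial_\phi u-i\alpha u$. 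Both quantities are finite because $u$ vanishes near the origin.

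For the radial part, Lemma~\ref{lem:radial} in the range $p\ge2$ gives $R(u)\ge\bigl(\tfrac{p-2}{p}\bigr)^p\int|u|^p|x|^{-p}$. That lemma involves only $\partial_r u$, so the potential plays no role. For the angular part, I will repeat the slicing argument of Lemma~\ref{lem:slicing} verbatim. For each $r>0$, the function $\phi\mapsto u(r,\phi)$ is smooth and $2\pi$-periodic, hence admissible in \eqref{eq:lambdaC}. Therefore
\[
\int_0^{2\pi}|\partial_\phi u-i\alpha u|^p\,d\phi\ge\lambda(\alpha,p)\int_0^{2\pi}|u|^p\,d\phi.
\]
Multiplying by $r^{1-p}$ and integrating over $r\in(0,\infty)$ gives $\Phi_\alpha(u)\ge\lambda(\alpha,p)\int|u|^p|x|^{-p}$.

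Finally, the lower bound in \eqref{eq:mainC} of Theorem~\ref{thm:C} gives
\[
\lambda(\alpha,p)\ge\Bigl(\tfrac{|\nu|}{\sinh(\pi|\nu|)}\Bigr)^p\bigl(\sin^2\pi\mu+\sinh^2\pi\nu\bigr)^{p/2}.
\]
Adding the radial and angular bounds yields the claim.

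There is no real obstacle. The only points to check are, first, that \eqref{eq:split} holds for complex $\alpha$, which it does because $\hat e_r,\hat e_\phi$ are real orthonormal vectors; and second, the degenerate case $\nu=0$. There the factor $|\nu|/\sinh(\pi|\nu|)$ must be read as its limit $1/\pi$, so the bound reduces to $(\sin(\pi\mu)/\pi)^p$ and agrees with Theorem~\ref{thm:angular}. This is consistent with Theorem~\ref{thm:C}, whose proof via Schur's test has the same continuous limit as $\nu\to0$.
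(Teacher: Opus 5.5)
Your proposal is correct and is exactly the paper's argument. It uses the pointwise inequality $(X^2+Y^2)^{p/2}\ge X^p+Y^p$ for $p\ge2$, then Lemma~\ref{lem:radial} for the radial part, then the slicing Lemma~\ref{lem:slicing} with $\lambda(\alpha,p)$ in place of $\lambda(\beta,p)$, and finally the lower bound of Theorem~\ref{thm:C}, just as in the proof of Theorem~\ref{cor:allp}. Your note that $|\nu|/\sinh(\pi|\nu|)$ must be read as its limit $1/\pi$ when $\nu=0$ is a useful clarification that the paper leaves implicit.
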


\begin{proof}
We apply the lower bound from Theorem~\ref{thm:C} to complete the proof as in the proof of Theorem \ref{cor:allp}. 
\end{proof}

\subsection{Three final remarks}

\begin{remark}
As $\nu\to0$ one has $\tfrac{|\nu|}{\sinh\pi|\nu|}\to\tfrac1\pi$ and
$\sqrt{\sin^2\pi\mu+\sinh^2\pi\nu}\to|\sin\pi\mu|=\sin(\pi\theta)$ with
$\theta=\dist(\mu,\Z)$. Thus the lower bound of \eqref{eq:mainC} tends to
$(\sin\pi\theta/\pi)^p$ and Corollaries~\ref{cor:improved}--\ref{cor:hom} reduce
to Theorem~\ref{thm:main} and Theorem~\ref{cor:allp}, respectively. 
\end{remark}

\begin{remark}
For $p=2$ the operator $T_\alpha$ is normal (its kernel \eqref{eq:green}
diagonalises in the Fourier basis, with singular values $|m-\alpha|^{-1}$), so
\eqref{eq:normrep} gives 
$\lambda(\alpha,2)=\dist(\alpha,\Z)^2=\dist(\mu,\Z)^2+\nu^2$. That is, the upper bound in
\eqref{eq:mainC} is attained (cf. the proof of
\cite[Theorem~2]{Kre19}).

For $p\neq2$ the
lower bound in \eqref{eq:mainC} loses only the universal factor already present
in the real case. It remains open  to find explicit sharp Hardy constants for the AB potentials for $p\neq2$.
\end{remark}

\begin{remark}
If $\mu=\Real\alpha\in\Z$ then $\sin\pi\mu=0$ and the two bounds in
\eqref{eq:mainC} coincide,
\[
   \lambda(\alpha,p)=|\Imag\alpha|^p \quad\text{for every }1<p<\infty.
\]
Hence a 
imaginary flux (modulo $\Z$) alone produces the $L^p$-Hardy inequality
\[
   \int_{\R^2}|\nabla_{\ab}u|^p\,dx\;\ge\;|\Imag\alpha|^p
   \int_{\R^2}\frac{|u|^p}{|x|^p}\,dx,
   \quad u\in C_c^\infty(\R^2\setminus\{0\}),
\]
 for all $1<p<\infty$. This is the nonlinear
continuum counterpart of Krej\v{c}i\v{r}\'ik's observation that the imaginary
part of $\alpha$ restores positivity even when its real part is an integer. All constants above are $1$-periodic in $\Real\alpha$ (the gauge
$u\mapsto e^{i\phi}u$ gives $\lambda(\alpha+1,p)=\lambda(\alpha,p)$, i.e.\ flux
quantisation).
\end{remark}

\subsection*{Acknowledgements}
The author thanks Ari Laptev and David Krej\v{c}i\v{r}\'{\i}k for their useful and encouraging comments on this paper. 

\subsection*{Funding}
This research is funded by Nazarbayev University under the grant 110326CRP0806 (D.S.).


\begin{thebibliography}{99}

\bibitem{BDELL} D.~Bonheure, J.~Dolbeault, M.~J.~Esteban, A.~Laptev and M.~Loss, \emph{Inequalities involving Aharonov--Bohm magnetic potentials in dimensions 2 and 3}, Reviews in Mathematical Physics \textbf{33} (2021), 2150006.
\bibitem{BY} N.~Byers and C.~N.~Yang, \emph{Theoretical considerations concerning quantized magnetic flux in superconducting cylinders}, Phys.\ Rev.\ Lett.\ \textbf{7} (1961), 46--49.

\bibitem{CK} C.~Cazacu and D.~Krej\v{c}i\v{r}\'{\i}k, \emph{The Hardy inequality and the heat equation with magnetic field in any dimension}, Commun. Partial Differ. Equ. \textbf{41} (2016), 1056--1088.

\bibitem{CKLL} C.~Cazacu, D.~Krej\v{c}i\v{r}\'{\i}k, N.~Lam and A.~Laptev, \emph{Hardy inequalities for magnetic $p$-Laplacians}, Nonlinearity \textbf{37} (2024), 035004.

\bibitem{CT} X.-P.~Chen and C.-L.~Tang, \emph{Remainder terms of $L^p$-Hardy inequalities with magnetic fields: the case $1<p<2$}, J. Geom. Anal. \textbf{35} (2025), 384.

\bibitem{FKLV} L.~Fanelli, D.~Krej\v{c}i\v{r}\'{\i}k, A.~Laptev and L.~Vega, \emph{On the improvement of the Hardy inequality due to singular magnetic fields}, Commun. Partial Differ. Equ. \textbf{45} (2020), 1202--1212.

\bibitem{Kre19} D.~Krej\v{c}i\v{r}\'{\i}k, 
\emph{Complex magnetic fields: An improved Hardy--Laptev--Weidl inequality and 
quasi-self-adjointness}, SIAM J. Math. Anal. \textbf{51} (2019), 790--807.



\bibitem{LLPP} C.~Lange, S.~Liu, N.~Peyerimhoff and O.~Post, \emph{Frustration index and Cheeger inequalities for discrete and continuous magnetic Laplacians}, Calc.\ Var.\ Partial Differential Equations \textbf{54} (2015), 4165--4196.

\bibitem{LW} A.~Laptev and T.~Weidl, \emph{Hardy inequalities for magnetic Dirichlet forms}, in: Mathematical Results in Quantum Mechanics (Prague, 1998), Oper.\ Theory Adv.\ Appl.\ \textbf{108}, Birkh\"auser, Basel, 1999, pp.~299--305.


\bibitem{LY} G.~Lu and Q.~Yang, \emph{Trudinger--Moser and Hardy--Trudinger--Moser inequalities for the Aharonov--Bohm magnetic field}, Calc.\ Var.\ Partial Differential Equations \textbf{63} (2024), 95.

\bibitem{RS} M. Ruzhansky  and D. Suragan,  \emph{Hardy Inequalities on Homogeneous Groups}, Progress in Mathematics, vol. 327, Springer Basel, 2019.

\end{thebibliography}
\end{document}